\theoremstyle{plain}
\newtheorem{theorem}{Theorem}[section]
\newtheorem{corollary} [theorem]{Corollary}
\newtheorem{lemma}[theorem]{Lemma}
\theoremstyle{definition}%
\begin{document}

\title[Realization and Classification of Hamiltonian-Circle
Multisigns]{Realization and Classification of Hamiltonian-Circle
Multisigns}\footnote{}

\author{Xiyong Yan}
\subjclass[2020]{Primary: 05C45 (Hamiltonian graphs); Secondary: 05C22 (Signed and weighted graphs)}
\keywords{ Hamiltonian cycles, triangle basis, binary cycle space}
\address{89 Park Ave,  Binghamton, NY,  USA,  13903.}
\email{xiyongyan@gmail.com}
\begin{abstract}
We investigate the multisigns of Hamiltonian circles in the multisigned complete graph 
\(\Sigma_n := (K_n, \sigma, \mathbb{F}_2^m)\).
The \emph{multisign} of a circle \(C\) is defined as the sum
\[
\sigma(C) := \sum_{e \in E(C)} \sigma(e).
\]
For a fixed \(m\) and sufficiently large \(n\), we show that the set of multisigns of Hamiltonian circles
\[
\{\sigma(H) : H  \text{ is a Hamiltonian circle of }  \Sigma_n)\}
\]
forms either a subspace, an affine subspace, or the entire space \(\mathbb{F}_2^m\),
except in certain exceptional cases.
\end{abstract}

\maketitle
\section{Introduction}

Our research question originates from the problem collection 
\emph{``Negative (and Positive) Circles in Signed Graphs''}~\cite{1}, 
where Professor Zaslavsky asks: 
\emph{If a signed graph is unbalanced and has a Hamiltonian circle, 
must it contain a negative Hamiltonian circle? A positive one?} 
In the paper "Hamiltonian cycles in signed and multisigned complete graphs"~\cite{2}, we proved that a signed complete graph on~$n$ vertices 
contains both a positive and a negative Hamiltonian circle 
if and only if it also contains both a positive and a negative triangle. This result naturally motivates the extension from single to multiple sign systems, 
where each edge carries more than one independent sign. 

In the paper \emph{``Double Signs of Hamiltonian Circles in Doubly Signed Complete Graphs''}~\cite{3}, 
we established conditions under which Hamiltonian circles realize 
all four possible double signs, and proved that this occurs whenever 
the set of triangle double signs contains at least three distinct values.

In this paper, we consider the case of \emph{multisigned complete graphs}, 
where each edge of the complete graph~$K_n$ is labeled by an element of~$\mathbb{F}_2^m$. 
For a circle~$C \subseteq K_n$, its \emph{multisign} is defined as the sum of the labels of its edges. 
We establish conditions under which Hamiltonian circles realize all possible multisigns. 
However, in certain special cases, not all multisigns of Hamiltonian circles can be realized. 
Several examples illustrating these exceptional cases will be presented later.

\section{Preliminaries}

\noindent\textbf{Definition (Multisigned complete graph).}
A \emph{multisigned complete graph} is denoted  $\Sigma_n:=(K_n,\sigma,\mathbb{F}_2^m)$ in which $K_n=(V,E)$ is the complete graph on $n$ vertices and $\sigma:E\to\mathbb{F}_2^m$ assigns to each edge an $m$-bit vector (a “multisign”).

\noindent\textbf{Definition (Hamiltonian Circle).}
 A \emph{Hamiltonian circle} in $\Sigma_n$ is a subgraph $C \subseteq G$ such that $C$ is a simple cycle containing every vertex of $\Sigma_n$ exactly once.

\noindent\textbf{Definition  (Covering \(C_4\)-necklace).} 
Let \(D_1, D_2, \dots, D_t\) be 4-circles (\(C_4\)'s) in \(\Sigma_n\), 
and let \(p_1, p_2, \dots, p_\ell\) be paths in \(\Sigma_n\).
A \emph{covering \(C_4\)-necklace} is a subgraph of \(\Sigma_n\) consisting of 
the 4-circles \(D_1, D_2, \dots, D_t\) and the paths \(p_1, p_2, \dots, p_\ell\) satisfying the following conditions:
\begin{enumerate}
    \item For each \(i = 1, 2, \dots, t\), let $v_{i1}$ and $v_{i3}$ be two opposite vertices of \(D_i\). 
Each of $v_{i1}$ and $v_{i3}$ is only connected to an endpoint of a path \(p_i\) 
or  a vertex of \(D_j\) for some \(j \neq i\) .

    \item the union of all \(D_i\)'s and \(p_i\)'s forms a closed chain. See Figure \ref{fig:diamond-necklace}.
    \item The collection \(\{D_1, D_2, \dots, D_t, p_1, p_2, \dots, p_\ell\}\) covers all vertices of $\Sigma_n$.
\end{enumerate}

\noindent\textbf{Definition (Normalization of a Vertex).}
Let $\Sigma_n = (K_n, \sigma, \mathbb{F}_2^m)$ be a multisigned complete graph.  
To \emph{normalize} a vertex $v \in V(K_n)$ means to apply a switching function 
on all vertices other than $v$ so that, after switching,  
every edge incident to $v$ has multisign $e:=(0,0,...,0)$.  

\section{
Realization and Classification of Hamiltonian-Circle Multisigns}

\begin{lemma} [$C_4$-necklace Lemma]
Suppose that \(\Sigma_n:=(K_n,\sigma,\mathbb{F}_2^m)\) contains a covering diamond necklace consisting of paths \(p_1, p_2, \dots, p_l\) and squares (or $C_4$'s)
\[
D_1, D_2, \dots, D_t,
\]
and that the set
\[
\{\sigma(D_1), \sigma(D_2), \dots, \sigma(D_t)\}
\]
 spans the vector space \(\mathbb{F}_2^m\). Then \(\Sigma_n\) exhibits all possible multisigns of Hamiltonian circles.

\end{lemma}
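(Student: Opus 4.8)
\emph{Proof proposal.} The plan is to exploit the fact that each square \(D_i\) of the necklace admits \emph{two} ways of being threaded by a Hamiltonian circle whose multisigns differ by exactly \(\sigma(D_i)\), and then to use the spanning hypothesis to reach every target multisign by toggling a suitable subset of the squares.

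\emph{Step 1: a reference Hamiltonian circle \(H_0\) that follows the necklace.} Traversing the closed chain once, inside each path \(p_k\) let the circle simply run along \(p_k\) from one endpoint to the other; inside each square \(D_i\), with cyclic order \(v_{i1}, v_{i2}, v_{i3}, v_{i4}\) and with \(v_{i1}, v_{i3}\) the two vertices that attach to the rest of the necklace (condition (1)), let the circle enter at \(v_{i1}\), run \(v_{i1}\,v_{i2}\,v_{i4}\,v_{i3}\) — using the diagonal \(v_{i2}v_{i4}\) of \(D_i\), which is available since the ambient graph is \(K_n\) — and leave at \(v_{i3}\); consecutive beads are joined using the necklace edges provided by condition (1). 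Because the chain is closed, because its beads cover \(V(\Sigma_n)\) by condition (3), and because \(v_{i2}, v_{i4}\) occur in no bead other than \(D_i\), this produces a single cycle through every vertex, i.e.\ a Hamiltonian circle \(H_0\).

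\emph{Step 2: the square-flip move.} Suppose a Hamiltonian circle \(H\) meets \(D_i\) in exactly the subpath \(v_{i1}\,v_{i2}\,v_{i4}\,v_{i3}\). Replacing this subpath by \(v_{i1}\,v_{i4}\,v_{i2}\,v_{i3}\) leaves the vertex set and the two attachment vertices \(v_{i1}, v_{i3}\) unchanged, and \(v_{i2}, v_{i4}\) are incident to nothing outside \(D_i\); hence the result \(H'\) is again a Hamiltonian circle. Working over \(\mathbb{F}_2^m\), the shared diagonal \(v_{i2}v_{i4}\) cancels, the remaining symmetric difference of the two subpaths is precisely \(E(D_i)\), and so
\[
\sigma(H') - \sigma(H) = \sum_{e \in E(D_i)} \sigma(e) = \sigma(D_i).
\]
Moreover, flip moves at distinct squares act on pairwise-disjoint edge sets, so they may be carried out simultaneously and their contributions to the multisign simply add.

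\emph{Step 3: hitting every multisign.} Fix an arbitrary target \(s \in \mathbb{F}_2^m\). Since \(\{\sigma(D_1), \dots, \sigma(D_t)\}\) spans \(\mathbb{F}_2^m\), there is a subset \(S \subseteq \{1, \dots, t\}\) with \(s + \sigma(H_0) = \sum_{i \in S} \sigma(D_i)\). Performing the flip move of Step 2 at exactly the squares indexed by \(S\), starting from \(H_0\), yields a Hamiltonian circle \(H_S\) with \(\sigma(H_S) = \sigma(H_0) + \sum_{i \in S}\sigma(D_i) = s\). Hence every element of \(\mathbb{F}_2^m\) is realized as the multisign of a Hamiltonian circle, which is the assertion of the lemma. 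I expect the only genuine friction to lie in Step 1 — namely, reading off from the "closed chain" definition and Figure~\ref{fig:diamond-necklace} exactly how consecutive beads attach and verifying that the prescribed routing yields one connected spanning cycle rather than several components; Steps 2 and 3 are then short and purely formal, and no switching/normalization is needed.
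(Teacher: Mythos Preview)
Your proposal is correct and follows essentially the same approach as the paper: both arguments add the diagonal \(v_{i2}v_{i4}\) to each square \(D_i\), observe that the two Hamiltonian \(v_{i1}\)--\(v_{i3}\) paths through the resulting diamond have multisigns differing by exactly \(\sigma(D_i)\), and then use the spanning hypothesis to select one of the two paths in each diamond so that the concatenation with the fixed paths \(p_k\) realizes the target multisign. Your ``flip from a reference \(H_0\)'' phrasing is a cosmetic repackaging of the paper's ``choose \(p_{a_i}^i\) with \(a_i\in\{0,1\}\)'' phrasing, and your honest flag about Step~1 matches the paper's own level of rigor there---it too simply asserts that the chosen paths assemble into a Hamiltonian circle.
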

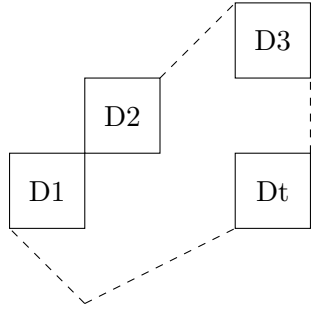
\begin{figure}[H]
    \centering
 
\begin{tikzpicture}[font=\small, scale=1.0]

  % D1 at (0,0) to (1,1)
  \draw (0,0) rectangle (1,1);
  \node at (0.5,0.5) {D1};

  % D4 shares the vertex (1,1) with D1
  \draw (1,1) rectangle (2,2);
  \node at (1.5,1.5) {D2};

  % D2 at (3,0) to (4,1)
  \draw (3,0) rectangle (4,1);
  \node at (3.5,0.5) {Dt};

  % D3 at (3,2) to (4,3)
  \draw (3,2) rectangle (4,3);
  \node at (3.5,2.5) {D3};

  % -- Dashed lines connecting selected vertices --
  % Top-left corner of D4 to top-left corner of D3
  \draw[dashed] (2,2) -- (3,3);

  % Bottom-left corner of D4 to bottom-left corner of D2

  % Bottom-right corner of D1 to bottom-right corner of D2
  \draw[dashed] (1,-1) -- (3,0);
 \draw[dashed] (1,-1) -- (0,0);
  % Top-right corner of D1 to bottom-left corner of D3
  \draw[dashed] (4,1) -- (4,2);

\end{tikzpicture}
\caption{A $C_4$-necklace with  squares \(D_1, D_2,..., D_t\) when $t=4$.}
\label{fig:diamond-necklace}
\end{figure}

\begin{proof}
For \(i = 1,2,\dots,t\), construct the \(i\)th square \(D_i\). Label two  vertices of $D_i$ that connect to  $D_j$ (for $j\neq i$) or a path as \(v_{i1}\) and \(v_{i3}\), and label the remaining two as \(v_{i2}\) and \(v_{i4}\). Add an edge between \(v_{i2}\) and \(v_{i4}\); this transforms the square \(D_i\) into a diamoned \(D_i'\). In \(D_i'\), there are two Hamiltonian paths \(p_{0}^i\) and \(p_{1}^i\) that start at \(v_{i1}\) and end at \(v_{i3}\) with multisigns \(\sigma(p_{0}^i)=x_i\) and \(\sigma(p_{1}^i)=\sigma(D_i) + x_i\), for some $x_i\in \mathbb{F}_2^m$.

Let \(g \in  \mathbb{F}_2^m\) be arbitrary. Our goal is to show that there exists a Hamiltonian circle $H$ (in \(\Sigma_n\)) such that 
$\sigma(H)=g.$

Since the paths \(p_1, p_2, \dots, p_l\) are fixed, each of them will have fixed multisign.   Let 
\[
h := \sum_{k=1}^l \sigma(p_k).
\]

Observe that \(g\), \(h\), and each \(x_i\) belong to \( \mathbb{F}_2^m\). Thus, the element
\[
g - h - \sum_{i=1}^t x_i
\]
lies in \( \mathbb{F}_2^m\). Since \(\{\sigma(D_1), \sigma(D_2), \dots, \sigma(D_t)\}\) spans $\mathbb{F}_2^m$, there exists a sequence \(a_1, a_2, \dots, a_t \in \{0,1\}\) such that
\[
\sum_{i=1}^t a_i \sigma(D_i) = g - h - \sum_{i=1}^t x_i.
\]

Set
\[
\sigma(p_{a_i}^i) = x_i + a_i \sigma(D_i).
\]
 Therefore,
\[
\sum_{i=1}^t \sigma(p_{a_i}^i) = \sum_{i=1}^t x_i + \sum_{i=1}^t a_i \sigma(D_i).
\]

It follows that
\[
\sum_{i=1}^t \sigma(p_{a_i}^i) = \sum_{i=1}^t x_i + \sum_{i=1}^t a_i \sigma(D_i) = \sum_{i=1}^t x_i + \Bigl(g - h - \sum_{i=1}^t x_i\Bigr) = g - h.
\]
That is $$g=\sum_{i=1}^t \sigma(p_{a_i}^i)+h=\sum_{i=1}^t \sigma(p_{a_i}^i)+ \sum_{k=1}^l \sigma(p_k).$$ Observe that the paths  $p_{a_i}^i$'s and $p_k$'s form a Hamiltonian circle in $\Sigma_n$ and this  Hamiltonian circle has multisign $g$. 
This completes the proof.
\end{proof}

\vspace{1cm}

The $C_4$-necklace Lemma is the main tool used to prove our theorem. 
The difficult part of the theorem is to show the existence of a $C_4$-necklace. 
Before we proceed to the next section, we present a preliminary fact. 

Let \(\mathcal{Z}(\Sigma_n)\) denote the cycle space of the underlying graph of \(\Sigma_n\), 
and let \(\sigma(\mathcal{Z}(\Sigma_n))\) be the set of multisigns of all cycles in \(\Sigma_n\). 
Notice that \(\sigma(\mathcal{Z}(\Sigma_n))\) is  a subspace of~$\mathbb{F}_2^m$. 
Suppose \(\Sigma_n := (K_n, \sigma, \mathbb{F}_2^m)\), 
and if \(\sigma(\mathcal{Z}(\Sigma_n))\) is a proper subspace~$U$ of~$\mathbb{F}_2^m$, 
we may redefine \(\Sigma_n := (K_n, \sigma, U)\) 
so that it satisfies the condition of the theorem, 
and the theorem can then be applied.

\noindent\textbf{Definition (almost-disjoint).} (See Figure~\ref{dis}) 
We call triangles \(T_1, T_2, \dots, T_r\) \emph{almost-disjoint} if the following conditions hold:
\begin{enumerate}
\item Any two distinct triangles share at most one common vertex. 
\item Among any three triangles, at least one is disjoint from the other two.
\end{enumerate}

\begin{figure}[H]
    \centering
\begin{tikzpicture}[scale=1.2,
  dot/.style={circle,fill=black,inner sep=1.2pt},
  every node/.style={font=\small}
]

% base vertices
\node[dot,label=below:$v_1$] (v1) at (0,0) {};
\node[dot,label=below:$v_2$] (v2) at (2,0) {};
\node[dot,label=below:$v_3$] (v3) at (3.5,0) {};
\node[dot,label=below:$v_4$] (v4) at (6,0) {};
\node[dot,label=below:$v_5$] (v5) at (8,0) {};
\node[dot,label=below:$v_{10}$] (v10) at (4,0) {};

% baseline
\draw (v1)--(v2)--(v3);
\draw  (v4)--(v5);

% apex vertices (placed to resemble your drawing)
\node[dot,label=above:$v_9$] (v9) at (1.2,1.2) {};
\node[dot,label=above:$v_8$] (v8) at (3.2,1.8) {};
\node[dot,label=above:$v_7$] (v7) at (5.0,1.6) {};
\node[dot,label=above:$v_6$] (v6) at (8.0,1.2) {};

\node[dot,label=above:$v_{11}$] (v11) at (8.5,0) {};
\node[dot,label=above:$v_{12}$] (v12) at (9.5,1) {};
\node[dot,label=above:$v_{13}$] (v13) at (8.5,1.2) {};

% directly above v5 for a vertical right edge

% triangles
\draw (v11)--(v13);
\draw (v11)--(v12)--(v13)--cycle;
\draw (v1)--(v2)--(v9)--cycle; % triangle on v1-v2 with apex v9
\draw (v2)--(v3)--(v8)--cycle; % triangle on v2-v3 with apex v8
\draw (v10)--(v4)--(v7)--cycle; % triangle on v3-v4 with apex v7

\draw (v4)--(v5)--(v6)--cycle; % triangle on v4-v5 with apex v6
\draw (v1)--(v9);
\draw (v2)--(v8);
\draw (v10)--(v7);
\draw (v4)--(v6);
\end{tikzpicture}
\caption{almost-disjoint triangles.}
\label{dis}
\end{figure}
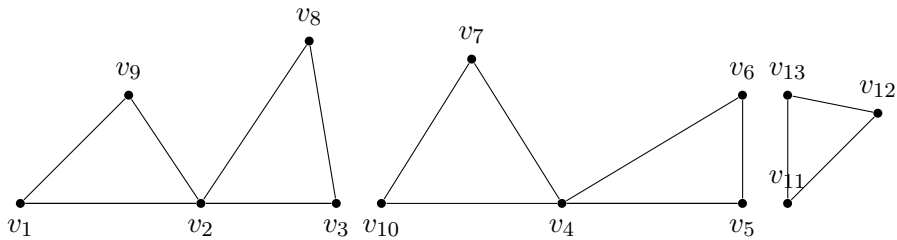

\noindent\textbf{Remark.} 
Later on, if we replace triangles by $C_4$'s and say they are almost-disjoint, 
the same conditions apply. 
The definition also applies when triangles are replaced by triangular paths.

In our main theorem, we exclude exceptional configurations and focus on the general (or normal) case. 
Given $\Sigma_n$, for sufficiently large~$n$, 
if the multisigns of triangles span the space $\mathbb{F}_2^m$ for some~$m$, 
then there likely exist $m$ almost-disjoint triangles with multisigns 
$x_1, x_2, \ldots, x_m$ such that 
\[
\mathbb{F}_2^m = \operatorname{span}\{x_1, x_2, \ldots, x_m\}.
\]
However, if such almost-disjoint triangles do not exist, there are cases in which not all multisigns of Hamiltonian circles can be realized.
We will present special cases in which $\Sigma_n$ fails to realize all 
multisigns of Hamiltonian circles, as well as cases in which all multisigns 
can indeed be realized.

\textbf{Example.} 
Suppose in $\Sigma_n := (K_n, \sigma, \mathbb{F}_2^m)$ we normalize a vertex~$v$. 
Let $e_1, e_2, \ldots, e_m$ be edges in $\Sigma_n \setminus v$ such that 
$\sigma(e_1), \sigma(e_2), \ldots, \sigma(e_m)$ are linearly independent. 
All other edges in $\Sigma_n \setminus v$ have the same multisign~$x$. 
Under this condition, we consider two cases to analyze whether all multisigns of Hamiltonian circles can be realized:

\textbf{Case 1 (Not all multisigns realized).} 
If any three or more edges among $e_1, e_2, \ldots, e_m$ share a common vertex or form a circle, 
then $\Sigma_n$ cannot realize all multisigns of Hamiltonian circles. 
For instance, if $x = e$, $m=3$ and $e_1, e_2, e_3$ share a common vertex  or form a circle, 
then no Hamiltonian circle will have the multisign 
$\sigma(e_1) + \sigma(e_2) + \sigma(e_3)$.

\textbf{Case 2 (All multisigns still realized).} 
If the edges $e_1, e_2, \ldots, e_m$ lie on a path, this may violate the almost-disjoint condition; 
however, in this case, we can still realize all multisigns of Hamiltonian circles. For instance, let 
\(\Sigma_n = (K_n, \sigma, \mathbb{F}_2^4)\) 
(see Figure~\ref{triangles}). 
Suppose that 
$\sigma(v_1v_2), \sigma(v_2v_3), \sigma(v_3v_4), \sigma(v_4v_5) \in \mathbb{F}_2^4$ 
are linearly independent, and that all other edges have the same multisign~$x$. 
Without loss of generality, let $x = e$. 
Suppose we wish to find a Hamiltonian circle with multisign 
$\sigma(v_1v_2) + \sigma(v_2v_3) + \sigma(v_4v_5)$. 
We can take the path 
$p := v_1v_2v_3v_7v_4v_5$, 
and connect the remaining vertices to form another path~$p_2$. 
Finally, we connect one endpoint of~$p$ to one endpoint of~$p_2$, 
and the other endpoint of~$p$ to the remaining endpoint of~$p_2$. 
The resulting graph is a Hamiltonian circle with multisign 
$\sigma(v_1v_2) + \sigma(v_2v_3) + \sigma(v_4v_5)$. In this way, for any element $y \in \mathbb{F}_2^4$, we can always find a Hamiltonian circle whose multisign is $y$.
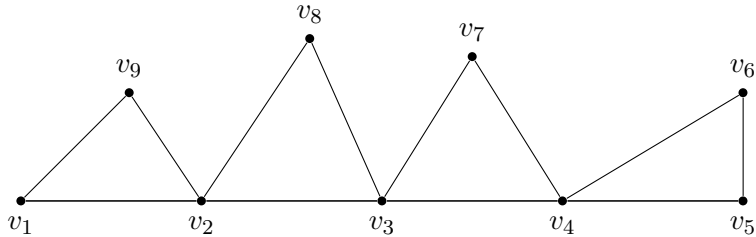
\begin{figure}[H]
    \centering
\begin{tikzpicture}[scale=1.2,
  dot/.style={circle,fill=black,inner sep=1.2pt},
  every node/.style={font=\small}
]

% base vertices
\node[dot,label=below:$v_1$] (v1) at (0,0) {};
\node[dot,label=below:$v_2$] (v2) at (2,0) {};
\node[dot,label=below:$v_3$] (v3) at (4,0) {};
\node[dot,label=below:$v_4$] (v4) at (6,0) {};
\node[dot,label=below:$v_5$] (v5) at (8,0) {};

% baseline
\draw (v1)--(v2)--(v3)--(v4)--(v5);

% apex vertices (placed to resemble your drawing)
\node[dot,label=above:$v_9$] (v9) at (1.2,1.2) {};
\node[dot,label=above:$v_8$] (v8) at (3.2,1.8) {};
\node[dot,label=above:$v_7$] (v7) at (5.0,1.6) {};
\node[dot,label=above:$v_6$] (v6) at (8.0,1.2) {}; % directly above v5 for a vertical right edge

% triangles
\draw (v1)--(v2)--(v9)--cycle; % triangle on v1-v2 with apex v9
\draw (v2)--(v3)--(v8)--cycle; % triangle on v2-v3 with apex v8
\draw (v3)--(v4)--(v7)--cycle; % triangle on v3-v4 with apex v7
\draw (v4)--(v5)--(v6)--cycle; % triangle on v4-v5 with apex v6
\draw (v1)--(v9);
\draw (v2)--(v8);
\draw (v3)--(v7);
\draw (v4)--(v6);
\end{tikzpicture}
\caption{Four triangles are not almost-disjoint.}
\label{triangles}
\end{figure}

\noindent\textbf{Definition (Consecutive triangles).} 
We call two triangles \(T_i\) and \(T_j\)  
\emph{consecutive} if they share a common edge.

\noindent\textbf{Definition (Triangular Path).} (See Figure \ref{fig:placeholder})
Let $\Sigma_n = (K_n, \sigma, \mathbb{F}_2^m)$ be a multisigned complete graph.  
For each index $i$, the $i$-th \emph{triangular path} $P_i$ is the subgraph of $K_n$ formed by a sequence of consecutive triangles
\[
T_i,\, Y_{i1},\, Y_{i2},\, \dots,\, Y_{i r_i},\, R_i,
\] where $r_i+2$ is the length of $P_i$,
such that:
\begin{enumerate}
    \item Each pair of consecutive triangles in the sequence share a common edge.
    \item Nonconsecutive triangles are edge-disjoint.
    \item The first and last triangles, \(T_i\) and \(R_i\), are called the \emph{end triangles} of \(P_i\). \(T_i\) and \(R_i\) share an edge if and only if $r_i=0.$
\end{enumerate}

Let $\mathcal{H}(\Sigma_n)$ be the set of all Hamiltonian circles in $\Sigma_n.$

Let $\mathcal{S}(\Sigma_n)$ be The set of all multisigns of Hamiltonian circles in $\Sigma_n.$

Let $\eta(x)$ denote the number of triangles with multisign $x$. 

\begin{theorem}\label{lemma4}
Let 
\(\Sigma_n := (K_n, \sigma, \mathbb{F}_2^m)\)
be a multisigned complete graph. 
Suppose 
$\eta(e)\geq \eta(t)$ for all $t\in \mathbb{F}_2^m$ and  there exist triangles whose multisigns
\(x_1, x_2, \dots, x_m \in \mathbb{F}_2^m\) and 
are linearly independent. 
Excluding special cases, for sufficiently large \(n\), 
the multisigned complete graph \(\Sigma_n\) realizes all multisigns of Hamiltonian circles.
\end{theorem}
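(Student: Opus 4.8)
The plan is to deduce the theorem from the $C_4$-necklace Lemma: it suffices to exhibit in $\Sigma_n$ a covering $C_4$-necklace whose squares $D_1,\dots,D_t$ have multisigns spanning $\mathbb{F}_2^m$. The engine is the identity that if a diamond $D_i'$ (a $C_4$ together with one chord) is regarded as two triangles $S,S'$ sharing that chord, then the outer square $D_i$ satisfies $\sigma(D_i)=\sigma(S)+\sigma(S')$, since the two copies of the chord's multisign cancel over $\mathbb{F}_2$. Hence, if $S$ is a basis triangle with $\sigma(S)=x_i$ and $S'$ is a triangle of multisign $e$ sharing an edge with $S$, then $\sigma(D_i)=x_i$, and $m$ such diamonds already furnish squares spanning $\mathbb{F}_2^m$.

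First I would fix triangles $T_1,\dots,T_m$ with $\sigma(T_i)=x_i$ and, invoking the hypothesis that we lie outside the excluded configurations, arrange that they are almost-disjoint (or, more generally, lie in pairwise edge-disjoint triangular paths) and that each $T_i$ has an edge contained in some triangle of multisign $e$. The availability of enough $e$-triangles is where $\eta(e)\ge\eta(t)$ enters: since $\sum_{t}\eta(t)=\binom{n}{3}$, we get $\eta(e)\ge 2^{-m}\binom{n}{3}$, and a double count over pairs (edge, non-$e$-triangle containing it) then shows that at most $(1-2^{-m})\binom{n}{2}$ edges lie in no $e$-triangle, so a positive proportion of the edges of $K_n$ do lie in some $e$-triangle. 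The role of the excluded special cases (compare the Example, Case~1) is precisely to rule out the rigid distributions in which the only triangles realizing the $x_i$ are trapped on such ``bad'' edges or forced to overlap. For each $i$ I would then select a vertex $d_i$, distinct from all vertices used so far, with $\sigma(b_ic_id_i)=e$ for a suitable edge $b_ic_i$ of $T_i$; this is possible because $n$ is large and $e$-triangles through a good edge are plentiful. The triangle $b_ic_id_i$ together with $T_i$ forms the diamond $D_i'$ whose outer square $D_i$ has $\sigma(D_i)=x_i$, with designated opposite vertices $v_{i1}:=a_i$ and $v_{i3}:=d_i$ (the non-chord vertices) and chord $v_{i2}v_{i4}:=b_ic_i$.

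Next I would assemble these $m$ pairwise (almost-)disjoint diamonds into a covering $C_4$-necklace. They occupy only $O(m)$ vertices, so for $n$ large there remain $n-O(m)$ uncovered vertices; I would split these into $m$ nonempty batches and, for each $i$ with indices read mod $m$, join $v_{i3}$ to $v_{(i+1)1}$ by a path $p_i$ through the $i$-th batch, every connecting edge being available since the underlying graph is $K_n$. The resulting subgraph is a closed chain of squares and paths covering all of $V(K_n)$ in which each $v_{i1},v_{i3}$ meets only a path-endpoint, so it is a covering $C_4$-necklace in the sense of the definition. Since $\{\sigma(D_1),\dots,\sigma(D_m)\}=\{x_1,\dots,x_m\}$ spans $\mathbb{F}_2^m$, the $C_4$-necklace Lemma yields $\mathcal{S}(\Sigma_n)=\mathbb{F}_2^m$.

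I expect the main obstacle to be the selection step of the second paragraph: showing that, once the genuinely obstructive configurations are excluded, one can \emph{simultaneously} realize each basis vector $x_i$ by a triangle, keep these triangles almost-disjoint, and ensure each has an edge sitting inside an $e$-triangle so that the diamond's outer square lands exactly on $x_i$. These requirements interact, and the counting bound only guarantees abundance ``on average,'' so the argument must carefully isolate and quarantine the exceptional distributions — which is exactly the content compressed into the phrase ``excluding special cases,'' and where triangular paths may be needed in place of single triangles to reach the $e$-rich part of $\Sigma_n$. The remaining ingredients (the diamond multisign identity, the vertex-disjointness bookkeeping, and the final necklace assembly) are routine given the large-$n$ hypothesis and the $C_4$-necklace Lemma.
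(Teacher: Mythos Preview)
Your overall strategy matches the paper's: reduce to the $C_4$-necklace Lemma by building $m$ almost-disjoint squares with linearly independent multisigns, each square arising as the symmetric difference of a basis triangle and a nearby $e$-triangle. Where you diverge is in how ``nearby'' that $e$-triangle must be. You want to glue an $e$-triangle directly onto an edge of each $T_i$, which gives $\sigma(D_i)=x_i$ on the nose and makes linear independence immediate. The paper does not assume such an adjacent $e$-triangle exists; instead it takes for each $i$ a distant $e$-triangle $R_i$, connects $T_i$ to $R_i$ by a \emph{triangular path} $P_i$, and then runs a coset-transition argument: with $U_i=\operatorname{span}(\{\sigma(Q_j):j<i\}\cup\{\sigma(T_j):j>i\})$, the first triangle of $P_i$ lies in $\sigma(T_i)+U_i$ while the last ($R_i$, with multisign $e$) lies in $U_i$, so somewhere along $P_i$ two consecutive triangles straddle the two cosets, and their symmetric difference $Q_i$ satisfies $\sigma(Q_i)\in\sigma(T_i)+U_i$. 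An induction on $i$ then shows $\{\sigma(Q_1),\dots,\sigma(Q_m)\}$ is linearly independent.

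This coset walk is the idea you are missing. Your counting bound $\eta(e)\ge 2^{-m}\binom{n}{3}$ (which the paper does not use) shows $e$-triangles are abundant globally, but it does not force any of them to share an edge with a chosen $T_i$; indeed one can arrange all $e$-triangles to avoid a bounded set of edges without falling into the paper's ``special cases.'' You correctly anticipate in your final paragraph that triangular paths may be needed to reach the $e$-rich part of $\Sigma_n$, but once you travel along such a path the square you extract no longer has multisign exactly $x_i$, so your ``linear independence for free'' evaporates --- and recovering it is precisely what the paper's $U_i$-coset argument accomplishes. Apart from this, your diamond identity, disjointness bookkeeping, and necklace assembly are all in line with the paper; note also that the paper's Case~2 (no almost-disjoint basis triangles) is left explicitly unfinished, so neither argument is complete there.
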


\begin{proof}
Excluding the special cases, we prove the theorem by considering two cases: whether or not there exist almost-disjoint triangles \(T_1, T_2, \dots, T_m\)  whose multisigns
\(\sigma(T_1), \sigma(T_2), \dots, \sigma(T_m)\) 
are linearly independent. In each case, we will show that $\Sigma_n$ realizes all multisigns
of Hamiltonian circles.

\textbf{Case 1.} Suppose there exist almost-disjoint triangles 
\(T_1, T_2, \dots, T_m\) such that
\[
\sigma(T_1), \sigma(T_2), \dots, \sigma(T_m)
\]
are linearly independent. Suppose there exist disjoint triangles 
\(R_1, R_2, \dots, R_m\) 
with \(\sigma(R_i) = e\), such that they are also disjoint from each triangle $T_i$. 

Choose triangular paths 
\(P_1, P_2, \dots, P_m\)
such that each path \(P_i\) connects \(T_i\) to \(R_i\).
Moreover, each pair of paths \(P_i\) and \(P_j\) is either disjoint or shares exactly one common vertex \(v \in V(T_i) \cap V(T_j)\).

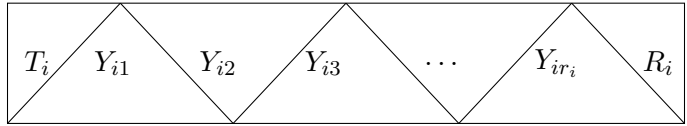
\begin{figure}[H]
    \centering

\begin{tikzpicture}[scale=1, every node/.style={font=\small}]
%------------------- Top strip -------------------

\begin{scope}[yshift=-2.5cm]
\draw (0,0) rectangle (9,1.6);

% triangles
\draw (0,0)--(1.5,1.6);
\draw (1.5,1.6)--(3,0);
\draw (3,0)--(4.5,1.6);
\draw (4.5,1.6)--(6,0);
\draw (6,0)--(7.5,1.6);
\draw (7.5,1.6)--(9,0);

% labels
\node at (.4,0.8) {$T_i$};
\node at (1.4,0.8) {$Y_{i1}$};
\node at (2.8,0.8) {$Y_{i2}$};
\node at (4.2,0.8) {$Y_{i3}$};
\node at (5.8,0.8) {$\dots$};
\node at (7.3,0.8) {$Y_{ir_i}$};
\node[right] at (8.3,0.8) {$R_i$};
\end{scope}

%------------------- Bottom strip -------------------

\end{tikzpicture}
 
    \caption{The triangular path $P_i$}
    \label{fig:placeholder}
\end{figure}
\smallskip
\noindent

For each \(i =1,2,...,m\), we consider the triangular path \(P_i\). Within \(P_i\),  we aim to find a quadrilateral 
\(Q_i\), 
formed by two adjacent triangles, so that after all $Q_i$ are constructed, the vectors
\(\sigma(Q_1),\sigma(Q_2),...,\sigma(Q_m)\) 
are linearly independent. 

Let $W:=\operatorname{span}\{\sigma(T_1),\sigma(T_2),..., \sigma(T_m)\}.$

Let $$U_1:=\operatorname{span}\{ \sigma(T_2),\sigma(T_3),...,\sigma(T_m) \}.$$

Consider the triangular path $P_1$; see Figure~ \ref{fig:placeholder} with $i=1.$  If $\sigma(Y_{11}) \in U_{1},$ then  \(\sigma(T_1)+\sigma(Y_{11}) \in \sigma(T_1)+ U_{1}\). (Note: Here $\sigma(Y_{11})$ means $\sigma(Y_{1,1}).$ )
We choose the triangles \(T_1\) and \(Y_{11}\),
and define
\[
Q_1 = T_1 \,\Delta\, Y_{11},
\qquad 
\sigma(Q_1) = \sigma(T_1 \,\Delta\, Y_{11})
= \sigma(T_1) + \sigma(Y_{11}).
\]

We may write
\[
\sigma(Q_1)
= \sigma(T_1) +\sum_{j =2}^m a_{1j} \, \sigma(T_j), \text{ for some}
\quad a_{1j} \in \{0,1\}.
\]

\smallskip
If instead \(\sigma(Y_{11}) \notin U_{1}\), then \(\sigma(Y_{11}) \in U_{1}+\sigma(T_1)\). Indeed, since the vectors $\sigma(T_1),\dots,\sigma(T_m)$ are linearly independent, we have $\sigma(T_1)\notin U_1$.
Hence a disjoint union of two cosets $ U_1 \,\sqcup\, (\sigma(T_1)+U_1)=W$.
 Since every triangle in \(P_1\) lies in \(\Sigma_n\), each multisign \(\sigma(Y_{1j})\) belongs to \(W\). Thus, since $\sigma(Y_{11})\in W$ and it does not lie in $U_1$,  it must lie in the other coset $\sigma(T_1)+U_1$.
We then examine the next triangle \(Y_{12}\).
If \(\sigma(Y_{12}) \in U_{1}\), 
we choose the triangles \(Y_{12}\) and \(Y_{11}\), and
we set
\[
Q_1 = Y_{11} \,\Delta\, Y_{12}.
\]
Otherwise, if  \(\sigma(Y_{12}) \notin U_1,\) then  \(\sigma(Y_{12}) \in \sigma(T_1)+ U_1\). 
We continue to the next triangle \(Y_{13}\), and so on.
We proceed in this manner until we find two adjacent triangles 
whose multisigns have the property that
one lies in \(U_1+\sigma(T_1)\) while the other does not.
We then define \(Q_1\) as their symmetric difference.

\smallskip
If all of the multisigns 
\(\sigma(Y_{11}), \sigma(Y_{12}), \dots, \sigma(Y_{1r_1})\)
belong to \(U_1+\sigma(T_1)\),
we instead select \(Y_{1r_1}\) and \(R_1\), and define
\[
Q_1 = Y_{1r_1} \,\Delta\, R_1.
\]
Then
\[
\sigma(Q_1)
= \sigma(Y_{1r_1}) + \sigma(R_1)
= \sigma(Y_{1r_1}) + e
= \sigma(T_1) + \sum_{j =2}^m a_{1j} \, \sigma(T_j), 
\quad a_{1j} \in \{0,1\}.
\]

\smallskip
Therefore, in all cases we can express
\[
\sigma(Q_1)
=\sigma(T_1) + \sum_{j =2}^m a_{1j} \, \sigma(T_j).
\]

Then, replace $\sigma(T_1)$ in $\{ \sigma(T_1),\sigma(T_2),...,\sigma(T_m) \}$ with $\sigma(Q_1).$

Now we show   that 
\[\mathcal{T}_1:=\{\sigma(Q_1),\sigma(T_2),...,\sigma(T_m)\}\] 
is linearly independent. Indeed, since $\{\sigma(T_2),...,\sigma(T_m)\}$ is linearly independent, and by our construction,  \[
\sigma(Q_1)
=\sigma(T_1) + \sum_{j =2}^m a_{1j} \, \sigma(T_j).
\] 
Since $\sum_{j =2}^m a_{1j} \, \sigma(T_j)\in U_1$ and $\sigma(T_1)\notin U_1 $, it follows that $\sigma(Q_1)\notin U_1 $. Hence, $\mathcal{T}_1$ is linearly independent.

For each $i=1,2,...,m$, let $$U_i:=\operatorname{span}\{ \sigma(Q_1),\sigma(Q_2),...,\sigma(Q_{i-1}),\sigma(T_{i+1}),...,\sigma(T_m) \},$$
 we construct $Q_i$ in the same manner as $Q_1$, one at a time.

In $P_i$, we can find two adjacent triangles  whose multisigns have the property that one lies in $U_i+\sigma(T_i)$ while
the other does not.
Symmetric difference of these two adjacent triangles yields $Q_i$, and the resulting multisign satisfies  \[
\sigma(Q_i)
= \sigma(T_i) + \sum_{j =1}^{i-1} a_{ij} \, \sigma(Q_j)+\sum_{j =i+1}^m a_{ij} \, \sigma(T_j), \text{ for some}
\quad a_{ij} \in \{0,1\}.
\]

Now,  we are ready to show \[\mathcal{T}_i:=\{\sigma(Q_1),\sigma(Q_2),...,\sigma(Q_i),\sigma(T_{i+1}),...,\sigma(T_m)\}\] is linearly independent by induction on $i$. 

Base case. We have just shown that $\mathcal{T}_1$ is linearly independent. 

Assume that  $$\mathcal{T}_{i}=\{\sigma(Q_1),\sigma(Q_2),...,\sigma(Q_i),\sigma(T_{i+1}), \sigma(T_{i+2}), \dots, \sigma(T_m)\}$$ is linearly independent. We need to show that the result holds for $\mathcal{T}_{i+1}$.  That is, we must show that $$\{\sigma(Q_1),\sigma(Q_2),...,\sigma(Q_{i+1}),\sigma(T_{i+2}), \sigma(T_{i+3}), \dots, \sigma(T_m)\}$$ is linearly independent. 

By the induction hypothesis, $\mathcal{T}_i$ is linearly independent. Thus, $\operatorname{span}(\mathcal{T}_i)$$=W$ and  $\sigma(T_{i+1})\notin U_{i+1}$. It follows $$U_{i+1}\sqcup(\sigma(T_{i+1})+U_{i+1})=W.$$

From the construction of $\sigma(Q_{i+1})$,
$$\sigma(Q_{i+1})
=\sigma(T_{i+1}) + \sum_{j =1}^{i} a_{(i+1)j} \, \sigma(Q_j)+\sum_{j =i+2}^m a_{(i+1)j} \, \sigma(T_j).$$

Since
\[
 \sum_{j =1}^{i} a_{(i+1)j} \, \sigma(Q_j)+\sum_{j =i+2}^m a_{(i+1)j} \, \sigma(T_j)
\in  U_{i+1},
\]
and together with
\[
\sigma(T_{i+1})\notin U_{i+1},
\]
it follows  that
\[
\sigma(Q_{i+1})\notin U_{i+1}.
\]
Hence, $\{\sigma(Q_1),\dots,\sigma(Q_{i+1}),\sigma(T_{i+2}),\dots,\sigma(T_m)\}$ is linearly independent, 
completing the induction step. Thus, $\{\sigma(Q_1),\dots,\sigma(Q_m)\}$ is linearly independent

Observe that the subgraphs $Q_1, Q_2, \ldots, Q_m$ are almost-disjoint.
 We can find paths connecting all $Q_i$'s to form a covering $C_4$-necklace. 
Then we apply the $C_4$-necklace Lemma, which implies that $\Sigma_n$ 
realizes all multisigns of Hamiltonian circles.

\textbf{Case 2.} Suppose there do not exist almost-disjoint triangles 
\(T_1, T_2, \dots, T_m\) such that
\[
\{\sigma(T_1), \sigma(T_2), \dots, \sigma(T_m)\}
\]
is linearly independent.
I will update this part later. 

\end{proof}

\begin{lemma}\label{lemma5}
Let 
\(\Sigma_n := (K_n, \sigma, \mathbb{F}_2^m)\)
be a multisigned complete graph. 
Suppose that 
$\eta(x) \geq \eta(t)$ for all $t \in \mathbb{F}_2^m$, 
where $x$ is a non-identity element of $\mathbb{F}_2^m$, 
and that there exist triangles whose multisigns 
\(x_1, x_2, \dots, x_m \in \mathbb{F}_2^m\) 
are linearly independent. 
Assume there exists at least one triangle with multisign~$e$. 
Excluding special cases, for sufficiently large~$n$, 
the multisigned complete graph~\(\Sigma_n\) realizes all multisigns of Hamiltonian circles.
\end{lemma}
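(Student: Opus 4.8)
The plan is to follow the architecture of the proof of Theorem~\ref{lemma4}, Case~1, adapting it to the situation in which the most frequent triangle multisign is a nonzero element $x$ rather than $e$. I would first fix triangles $T_1,\dots,T_m$ whose multisigns $x_1,\dots,x_m$ are linearly independent; since $\eta(x)\ge 1$, the element $x$ is itself a triangle multisign, and because $x_1,\dots,x_m$ span $\mathbb{F}_2^m$ one may also assume $\sigma(T_1)=x$. As $\eta(x)\ge\eta(t)$ for all $t$ and $n$ is large, there are arbitrarily many pairwise-disjoint triangles of multisign $x$; reserve $m$ of them, disjoint from the $T_i$, to play the role that the $e$-triangles $R_i$ play in Theorem~\ref{lemma4}. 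Then route pairwise almost-disjoint triangular paths $P_1,\dots,P_m$, with $P_i$ joining $T_i$ to the reserved triangle $R_i$, and, processing $i=1,2,\dots,m$ in turn, search along $P_i$ for two consecutive triangles whose multisigns lie in different cosets of the hyperplane $U_i:=\operatorname{span}\{\sigma(Q_1),\dots,\sigma(Q_{i-1}),\sigma(T_{i+1}),\dots,\sigma(T_m)\}$; their symmetric difference is a $C_4$, call it $Q_i$, and the induction proving that $\{\sigma(Q_1),\dots,\sigma(Q_m)\}$ is linearly independent is then exactly the one in Theorem~\ref{lemma4}.

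The point where the two proofs diverge is the fallback case, where every triangle on $P_i$ happens to lie in the single coset $\sigma(T_i)+U_i$, so that no consecutive pair on $P_i$ straddles the two cosets. In Theorem~\ref{lemma4} this was cured for free because $\sigma(R_i)=e=0\in U_i$, whence $Q_i=Y_{ir_i}\,\Delta\,R_i$ has multisign in $\sigma(T_i)+U_i$; here the reserved triangle $R_i$ has multisign $x$, which need not lie in $U_i$. Instead I would use the hypothesised triangle $T_e$ with $\sigma(T_e)=e=0\in U_i$: reroute $P_i$, while respecting the almost-disjointness constraints with the already-built $P_1,\dots,P_{i-1}$, so that it passes through $T_e$. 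The rerouted $P_i$ then contains a triangle in $U_i$ (namely $T_e$) at an interior position and $T_i$ in $\sigma(T_i)+U_i$ at an end, so some consecutive pair along it crosses between the cosets and produces $Q_i$. Since only $O(m)$ vertices are committed before step $i$, for $n$ large such a rerouting exists.

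The main obstacle is that $T_e$ can be absorbed into only one path, so the argument must guarantee that the fallback is genuinely forced for at most one index, i.e.\ that for all but at most one $i$ there is an unused triangle with multisign in $U_i$ (indeed in $U_i\setminus\{0\}$, so $T_e$ need not be spent at all in those steps). This is exactly where the special cases must be excluded: the fallback is forced on $P_i$ only when, up to the $O(m)$ vertices already used, every triangle of $\Sigma_n$ has multisign in the affine hyperplane $\sigma(T_i)+U_i$ — which, after normalizing a vertex, is precisely the degenerate configuration of the Example, Case~1, where the independent multisigns lie on edges through a common vertex or forming a circle and some Hamiltonian-circle multisigns genuinely cannot be realized. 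Excluding these, I expect to prove that each $P_i$ can be routed through some unused triangle whose multisign lies in $U_i$, so the fallback never arises; the resulting $Q_1,\dots,Q_m$ are almost-disjoint $C_4$'s with linearly independent, hence spanning, multisigns, and filling in the remaining vertices with paths produces a covering $C_4$-necklace. Applying the $C_4$-necklace Lemma then gives every multisign of a Hamiltonian circle.

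As in Theorem~\ref{lemma4}, one must also dispose of the case in which no almost-disjoint triangles with linearly independent multisigns exist. Here the single $e$-triangle together with the abundance of disjoint $x$-triangles should let us build the spanning gadgets out of triangular paths rather than out of genuinely disjoint triangles, and I would carry this out by the same symmetric-difference bookkeeping along triangular paths as in Case~1, once more setting aside the degenerate configurations identified in the Example.
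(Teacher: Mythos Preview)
Your approach rebuilds the machinery of Theorem~\ref{lemma4} from scratch, replacing the $e$-triangles $R_i$ by $x$-triangles and then patching the resulting coset mismatch with the single available $e$-triangle. The paper instead reduces the lemma to Theorem~\ref{lemma4} in one stroke: set $\Sigma_n^x := (K_n,\, x+\sigma,\, \mathbb{F}_2^m)$, adding $x$ to every edge label. A triangle has three edges, so each triangle multisign shifts by $3x = x$; hence in $\Sigma_n^x$ the most frequent triangle multisign becomes $x+x = e$, and the set $\{x+x_1,\dots,x+x_m,\,x+e\}$ still spans $\mathbb{F}_2^m$. Theorem~\ref{lemma4} therefore applies directly to $\Sigma_n^x$ and gives $\mathcal{S}(\Sigma_n^x) = \mathbb{F}_2^m$. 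A Hamiltonian circle has $n$ edges, so passing from $\Sigma_n$ to $\Sigma_n^x$ shifts its multisign by $nx$; whether $n$ is even or odd this is a bijection of $\mathbb{F}_2^m$, whence $\mathcal{S}(\Sigma_n) = \mathbb{F}_2^m$ as well.

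This reduction sidesteps precisely the difficulty you identified: once the shift makes $e$ the dominant triangle multisign, the fallback $Q_i = Y_{ir_i}\,\Delta\,R_i$ in Theorem~\ref{lemma4} works automatically, and there is no need to ration a single $e$-triangle among several paths or to argue that the fallback is forced for at most one index. That last claim is the genuine gap in your plan --- you assert it but do not prove it, and the almost-disjointness constraint prevents $T_e$ from being absorbed into more than one $P_i$. The shift-by-$x$ trick is the missing idea; it also lets the ``excluding special cases'' clause be inherited verbatim from Theorem~\ref{lemma4} rather than re-derived.
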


\begin{proof}
Let $\Sigma_n^x := (K_n, x + \sigma, \mathbb{F}_2^m)$. 
This means we add the multisign~$x$ to the multisign of every edge. 
Then, in $\Sigma_n^x$, we have 
\[
\mathbb{F}_2^m = \operatorname{span}\{x + x_1, x + x_2, \dots, x + x_m, x + e\}.
\]
We also have $\eta(x + x) \geq \eta(t + x)$ for all $t \in \mathbb{F}_2^m$. 
Thus, by Theorem~\ref{lemma4}, $\Sigma_n^x$ realizes all multisigns of Hamiltonian circles. 

If $n$ is even, adding~$x$ to all edges does not change the multisign of Hamiltonian circles. 
Hence, $\Sigma_n$ also realizes all multisigns of Hamiltonian circles. 

If $n$ is odd, the set of all multisigns of Hamiltonian circles in~$\Sigma_n^x$ is
\[
\mathcal{S}(\Sigma_n^x)
   := \{\, x + \sigma(H) : H \in \mathcal{H}(\Sigma_n^x) \,\}
   = \mathbb{F}_2^m. 
\]
This implies that the set of all multisigns of Hamiltonian circles in~$\Sigma_n$ is
\[
\mathcal{S}(\Sigma_n)
   := \{\, \sigma(H) : H \in \mathcal{H}(\Sigma_n) \,\}
   = \mathbb{F}_2^m. 
\]
\end{proof}

\begin{corollary}

Let 
\(\Sigma_n := (K_n, \sigma, \mathbb{F}_2^m)\)
be a multisigned complete graph. 
Suppose that 
$\eta(x) \geq \eta(t)$ for all $t \in \mathbb{F}_2^m$, 
where $x$ is a non-identity element of $\mathbb{F}_2^m$, 
and that there exist triangles whose multisigns 
\(x_1, x_2, \dots, x_m \in \mathbb{F}_2^m\) 
are linearly independent. 
Assume there exists at least one triangle with multisign~$y$, such that \(x_1+y, x_2+y, \dots, x_m+y \) 
are linearly independent.
Excluding special cases, for sufficiently large~$n$, 
the multisigned complete graph~\(\Sigma_n\) realizes all multisigns of Hamiltonian circles.
\end{corollary}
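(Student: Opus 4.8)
The plan is to reduce the Corollary to Lemma~\ref{lemma5} (equivalently, to Theorem~\ref{lemma4}) by the same sign-shifting device that proves Lemma~\ref{lemma5}. The only difference between the present statement and Lemma~\ref{lemma5} is that the distinguished triangle of multisign $e$ is replaced by a triangle whose multisign $y$ need only satisfy that $x_1+y,\dots,x_m+y$ are linearly independent; taking $y=e$ recovers Lemma~\ref{lemma5}, since then $x_i+y=x_i$. So I would follow that proof verbatim, modifying only the linear-algebra step.

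First I would form the shifted multisigned complete graph $\Sigma_n^x:=(K_n,\,x+\sigma,\,\mathbb{F}_2^m)$, obtained by adding $x$ to every edge multisign. Since a triangle has three edges and $3x=x$ in $\mathbb{F}_2^m$, a triangle of multisign $t$ in $\Sigma_n$ becomes a triangle of multisign $x+t$ in $\Sigma_n^x$; hence $\eta_{\Sigma_n^x}(s)=\eta_{\Sigma_n}(x+s)$ for all $s$, and in particular $\eta_{\Sigma_n^x}(e)=\eta_{\Sigma_n}(x)$ is maximal. Next I would verify the independence hypothesis of Theorem~\ref{lemma4} for $\Sigma_n^x$. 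The images of $T_1,\dots,T_m$ and of the triangle of multisign $y$ yield triangles of $\Sigma_n^x$ with multisigns $x+x_1,\dots,x+x_m,\,x+y$. The span $V$ of these $m+1$ vectors contains $(x+x_i)+(x+y)=x_i+y$ for each $i$, hence contains $\operatorname{span}\{x_1+y,\dots,x_m+y\}=\mathbb{F}_2^m$ by hypothesis; so $V=\mathbb{F}_2^m$, and we may select $m$ of these triangles whose multisigns form a basis. Thus Theorem~\ref{lemma4} applies to $\Sigma_n^x$, which therefore realizes all multisigns of Hamiltonian circles for all sufficiently large $n$.

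Finally I would transfer the conclusion back to $\Sigma_n$ exactly as in Lemma~\ref{lemma5}. A Hamiltonian circle $H$ has $n$ edges, so $\sigma_{\Sigma_n^x}(H)=nx+\sigma(H)$. If $n$ is even this equals $\sigma(H)$, whence $\mathcal{S}(\Sigma_n)=\mathcal{S}(\Sigma_n^x)=\mathbb{F}_2^m$. If $n$ is odd then $\sigma_{\Sigma_n^x}(H)=x+\sigma(H)$, so $\mathcal{S}(\Sigma_n^x)=x+\mathcal{S}(\Sigma_n)$; since $\mathcal{S}(\Sigma_n^x)=\mathbb{F}_2^m$ and translation by $x$ is a bijection of $\mathbb{F}_2^m$, we again obtain $\mathcal{S}(\Sigma_n)=\mathbb{F}_2^m$.

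The argument is essentially routine once the reduction is in place; the one point requiring care is the ``excluding special cases'' clause. The exceptional configurations are excluded for $\Sigma_n^x$ when Theorem~\ref{lemma4} is invoked, and because the shift $\sigma\mapsto x+\sigma$ is an invertible operation that preserves all underlying incidence data (shared vertices, shared edges, triangular paths, covering $C_4$-necklaces), the special cases of $\Sigma_n$ are precisely the pullbacks of the special cases of $\Sigma_n^x$; no new exceptions are introduced. I expect this bookkeeping about which configurations count as exceptional — rather than any genuine mathematical difficulty — to be the main thing to pin down.
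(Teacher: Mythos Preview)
Your argument is correct, and it uses the same sign-shifting device as the paper, but you shift by a different element. The paper's proof is a one-liner: add $y$ to every edge and apply Lemma~\ref{lemma5}. In the shifted graph $\Sigma_n^y$, the distinguished triangle of multisign $y$ becomes a triangle of multisign $e$, the triangles $T_i$ now have multisigns $x_i+y$ (linearly independent by hypothesis), and the element of maximal $\eta$ is $x+y$; so Lemma~\ref{lemma5} applies verbatim. You instead shift by $x$ and go directly to Theorem~\ref{lemma4}, which forces you to insert the extra linear-algebra step showing that $\{x+x_1,\dots,x+x_m,x+y\}$ spans $\mathbb{F}_2^m$ via the identities $(x+x_i)+(x+y)=x_i+y$. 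That step is fine, but it amounts to re-proving the relevant part of Lemma~\ref{lemma5} inside the corollary rather than invoking it as a black box. The paper's choice of shifting by $y$ is the cleaner reduction, since the hypothesis on $y$ is tailor-made to become the hypothesis of Lemma~\ref{lemma5} after that shift; your route is a small detour that reaches the same place.
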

\begin{proof}
    Add all edges of $\Sigma_n$ by $y$. Then, apply Lemma \ref{lemma5}, $\Sigma_n$ realizes all multisigns of Hamiltonian circles.
\end{proof}

\begin{lemma}\label{lemma6}
Let 
\(\Sigma_n := (K_n, \sigma, \mathbb{F}_2^m)\)
be a multisigned complete graph. 
Suppose every triangle of \(\Sigma_n\) has multisign in \(\{x_1,x_2,\dots,x_m\}\subseteq \mathbb{F}_2^{\,m}\), and that \(x_1,\dots,x_m\) are linearly independent. 
Assume that 
$\eta(x_1) \geq \eta(x_i)$ for all~$i$. 
Excluding special cases, for sufficiently large~$n$, 
\[
\mathcal{S}(\Sigma_n)
   := \{\, \sigma(H) : H \in \mathcal{H}(\Sigma_n) \,\}
\]
is an affine subspace of~$\mathbb{F}_2^m$ if~$n$ is odd, 
and a subspace of~$\mathbb{F}_2^m$ if~$n$ is even.
\end{lemma}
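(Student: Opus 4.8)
My plan is to normalize away one vertex, recognize the resulting edge-labelling of $K_{n-1}$ as a Gallai colouring, translate Hamiltonian-circle multisigns into parity vectors of colour counts along Hamiltonian paths, and then show that this set of parity vectors is an affine subspace of a fixed coordinate hyperplane --- the hyperplane being a linear subspace exactly when $n$ is even. First I would normalize a vertex $v_0$. Switching changes no cycle multisign, so all hypotheses persist, and afterwards every edge at $v_0$ has multisign $e$, while for $u,w\neq v_0$ the triangle $v_0uw$ forces $\sigma(uw)=\sigma(v_0uw)\in\{x_1,\dots,x_m\}$; also $e\notin\{x_1,\dots,x_m\}$, since $0$ cannot belong to a linearly independent family. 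Thus $G:=\Sigma_n\setminus v_0$ is a copy of $K_{n-1}$ whose edges are honestly coloured by $c(uw)=i$ when $\sigma(uw)=x_i$. For $u,w,z\neq v_0$ the triangle $uwz$ has multisign $x_{c(uw)}+x_{c(wz)}+x_{c(uz)}$; as $x_1,\dots,x_m$ are linearly independent, a sum of three of them lies in $\{x_1,\dots,x_m\}$ iff at least two of the three summands coincide, so no triangle of $G$ receives three distinct colours, i.e. $c$ is a Gallai colouring of $K_{n-1}$. Finally, each Hamiltonian circle of $\Sigma_n$ is two edges at $v_0$ (multisign $e$ each) together with a Hamiltonian path $P$ of $G$, and conversely, with $\sigma(H)=\sigma(P)$; hence
\[
\mathcal{S}(\Sigma_n)=\{\sigma(P):P\text{ a Hamiltonian path of }G\}.
\]

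Next, for a Hamiltonian path $P$ of $G$ write $n_i(P)$ for its number of colour-$i$ edges, so $\sum_i n_i(P)=n-2$ and $\sigma(P)=\sum_i\overline{n_i(P)}\,x_i$ with $\overline{n_i(P)}:=n_i(P)\bmod 2$. By linear independence, $\sigma(P)$ is faithfully encoded by the parity vector $\overline n(P)\in\mathbb{F}_2^m$, which is supported on the colours actually used by $G$ and has coordinate sum $\equiv n-2\pmod 2$. Identifying $\mathbb{F}_2^m$ with parity vectors, this already shows
\[
\mathcal{S}(\Sigma_n)\ \subseteq\ \Lambda:=\Bigl\{c\in\mathbb{F}_2^m:\ \textstyle\sum_i c_i\equiv n-2\ (\mathrm{mod}\ 2),\ c_i=0\text{ for every colour }i\text{ missing from }G\Bigr\},
\]
and $\Lambda$ is a linear subspace of $\mathbb{F}_2^m$ if $n$ is even and an affine (non-linear) subspace if $n$ is odd, while $\mathcal{S}(\Sigma_n)\neq\varnothing$ since $K_{n-1}$ has Hamiltonian paths. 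It therefore suffices to prove that $\mathcal{S}(\Sigma_n)$ is an affine subspace; since over $\mathbb{F}_2$ a nonempty set is an affine subspace iff it is closed under $g_1,g_2,g_3\mapsto g_1+g_2+g_3$, and an affine subspace is a linear subspace iff it contains $0$, the stated dichotomy drops out once we know $\mathcal{S}(\Sigma_n)=\Lambda$ (note $0\in\Lambda$ iff $n$ is even).

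The equality $\mathcal{S}(\Sigma_n)=\Lambda$ I would obtain from a local toggle feeding the $C_4$-necklace Lemma. Given colours $i\neq j$ used by $G$, look for two triangles $T,T'$ of $G$ sharing an edge with $\sigma(T)=x_i$ and $\sigma(T')=x_j$; then $T\,\triangle\,T'$ is a $C_4$ with $\sigma(T\,\triangle\,T')=x_i+x_j$. Installing such a $C_4$ in the fixed part of a covering $C_4$-necklace exactly as in the proof of the $C_4$-necklace Lemma, and routing the remaining vertices along a prescribed Hamiltonian path $P$ of $G$, shows that if $g=\sigma(P)\in\mathcal{S}(\Sigma_n)$ then also $g+(x_i+x_j)\in\mathcal{S}(\Sigma_n)$ (switch which of the two diamond-paths the necklace uses across that $C_4$). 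To see that enough such edge-sharing triangle pairs exist and that the resulting vectors $x_i+x_j$ span the direction space $\{c:\sum_i c_i=0,\ c_i=0\text{ off the used colours}\}$ of $\Lambda$, I would invoke Gallai's structure theorem: a Gallai colouring of $K_{n-1}$ admits a partition $V(G)=V_1\cup\dots\cup V_r$ with $r\ge 2$, all edges between any two parts of a single colour, and a quotient colouring of the $r$ parts in at most two colours; applied recursively and for $n$ large, this forces the colour classes --- outside the special configurations excluded in the statement --- to be rich enough to produce all the needed toggles (the rotation moves $P\mapsto P'$, obtained by joining an endpoint $w_1$ of $P$ to an interior vertex $w_k$ and deleting $w_{k-1}w_k$, change $\sigma(P)$ by $x_{c(w_{k-1}w_k)}+x_{c(w_1w_k)}$ and supplement them). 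Granting this, $\mathcal{S}(\Sigma_n)$ is a full coset of the direction space of $\Lambda$, hence equals $\Lambda$, and is a linear subspace exactly when $n$ is even, which is the assertion.

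The main obstacle is precisely this last structural step: teasing out of Gallai's theorem, outside the degenerate colourings, enough pairs of edge-sharing triangles with prescribed distinct multisigns $x_i,x_j$, and checking that the toggles and rotations they generate reach every parity vector of $\Lambda$ --- equivalently, identifying exactly which ``special'' Gallai colourings force $\mathcal{S}(\Sigma_n)$ to be a proper affine subspace of $\Lambda$, so that the lemma's conclusion must be hedged there. The normalization step, the parity bookkeeping of the first two paragraphs, and the affine-versus-linear dichotomy are then routine.
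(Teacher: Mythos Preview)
Your route is genuinely different from the paper's and, as you yourself flag, incomplete at the decisive step. The paper avoids your ``main obstacle'' entirely by a single trick you are missing: add $x_1$ to every edge label to form $\Sigma_n^{x_1}:=(K_n,x_1+\sigma,\mathbb{F}_2^m)$. Each triangle multisign becomes one of $e=x_1+x_1,\ x_2+x_1,\dots,x_m+x_1$; the nonzero ones are still linearly independent and span an $(m-1)$-dimensional subspace $U$; and $\eta(x_1)\ge\eta(x_i)$ in $\Sigma_n$ translates to $\eta(e)\ge\eta(t)$ for all $t$ in $\Sigma_n^{x_1}$. Now Theorem~\ref{lemma4} applies verbatim to $\Sigma_n^{x_1}$ over the ambient space $U$, giving $\mathcal{S}(\Sigma_n^{x_1})=U$. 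Since a Hamiltonian circle has $n$ edges, the global shift changes its multisign by $nx_1$, so $\mathcal{S}(\Sigma_n)=U$ when $n$ is even and $\mathcal{S}(\Sigma_n)=x_1+U$ when $n$ is odd. That is the whole proof.

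Your Gallai-colouring observation is correct and pleasant: after normalizing a vertex, linear independence of the $x_i$ really does forbid rainbow triangles in $K_{n-1}$, and your set $\Lambda$ coincides (under the basis $x_1,\dots,x_m$) with $U$ or $x_1+U$. But the step you identify as the obstacle --- producing enough edge-sharing triangle pairs $T,T'$ with $\sigma(T)=x_i$, $\sigma(T')=x_j$, and enough rotation moves, to generate all of $\Lambda$ --- is exactly the work already packaged in Theorem~\ref{lemma4} (finding almost-disjoint $C_4$'s whose multisigns span and assembling them into a covering necklace). You are re-deriving that theorem in a special case via Gallai's structure theorem rather than reducing to it. The $x_1$-shift is the missing idea that lets you invoke Theorem~\ref{lemma4} directly and sidestep the structural analysis altogether.
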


\begin{proof}
Add all edges of $\Sigma_n$ by the multisign~$x_1$ to obtain new graph $\Sigma_n^{x_1}$. Each  triangle's multisign in $\Sigma_n^{x_1}$ will be one of  $x_1+x_1,x_2 + x_1, x_3 + x_1, \ldots, x_m + x_1$. Also, $x_2 + x_1, x_3 + x_1, \ldots, x_m + x_1$ are linearly independent. Hence, $\mathcal{S}(\Sigma_n^{x_1})\subseteq U$, where $U=$ span$\{x_2 + x_1, x_3 + x_1, \ldots, x_m + x_1\}$ and rank$(U)=m-1$. Thus, $U$ is a proper subspace of $\mathbb{F}_2^m. $  Hence, we can write $\Sigma_n^{x_1} := (K_n,x_1+ \sigma, U).$
Since $\eta(x_1) \geq \eta(x_i)$ for all~$i$ in $\Sigma_n$, it follows $\eta(x_1+x_1) \geq \eta(x_i+x_1)$ for all~$i$ in $\Sigma_n^{x_1}$.

By Theorem~\ref{lemma4}, the set of all multisigns of Hamiltonian circles in~$\Sigma_n^{x_1}$ is
\[
\mathcal{S}(\Sigma_n^{x_1})
   := \{\, nx_1 + \sigma(H) : H \in \mathcal{H}(\Sigma_n^{x_1}) \,\}
   = U.
\]
This implies that the set of all multisigns of Hamiltonian circles in~$\Sigma_n$ is
\[
\mathcal{S}(\Sigma_n)
   := \{\, \sigma(H) : H \in \mathcal{H}(\Sigma_n) \,\}
   = U,
\]
if~$n$ is even, and
\[
\mathcal{S}(\Sigma_n)
   := \{\, \sigma(H) : H \in \mathcal{H}(\Sigma_n) \,\}
   = x_1 +U,
\]
if~$n$ is odd.

\end{proof}

\newpage

\section{Acknowledgment}

%\hspace{0.5cm}
My sincere thanks go to Professor Thomas Zaslavsky for his guidance and many helpful suggestions while preparing this draft.

%%%%%%%%%%%%%%%%%%%%%%%%%%%%%%%%%%%%%%%%%%%%%%%%

\end{document}